\documentclass{amsart}
\usepackage{amscd}
\usepackage{latexsym,amssymb,palatino,color,graphicx,amsmath,amsthm, hyperref,enumerate}
\usepackage{listings}
\usepackage{xcolor}
\lstdefinestyle{pythonstyle}{
	language=Python,
	backgroundcolor=\color{gray!10},
	basicstyle=\ttfamily\footnotesize,
	keywordstyle=\color{blue},
	commentstyle=\color{green!60!black},
	stringstyle=\color{red!75!green},
	showstringspaces=false,
	breaklines=true,
	captionpos=b
}

\newtheorem{theorem}{Theorem}[section]
\newtheorem{lemma}[theorem]{Lemma}

\newtheorem{proposition}[theorem]{Proposition}

\theoremstyle{definition}

\theoremstyle{remark}

\numberwithin{equation}{section}

\newcommand{\psl}[2]{\operatorname{\rm PSL}_#1(#2)}

\usepackage{longtable}

\begin{document}
	
\title[Generating $\psl{2}{q}$ by elements of prime orders $2$ and $p$]{Generating $\psl{2}{q}$ by elements of prime orders $2$ and $p$}
	
\author[D.~Farenick, R.~Maleki, S.~Medina Varela, S.~Singla]{Douglas Farenick, Roghayeh Maleki, Sofia Medina Varela, and Sushil Singla}
\address{Department of Mathematics and Statistics, University of Regina, Regina, Saskatchewan S4S 0A2, Canada}
	
\email{douglas.farenick@uregina.ca}
\email{roghayeh.maleki@uregina.ca}
\email{smc472@uregina.ca}
\email{sushil@uregina.ca}
	
\subjclass[2020]{Primary 20H20; Secondary 20B05, 20D99, 20F05}
\keywords{Projective special linear groups, $(a,b)$-generated groups}	
	
\begin{abstract} For primes $p\geq 5$, we determine those $q$ for which 
the projective special linear group $\psl{2}{q}$ over the finite
field of order $q$ is $(2,p)$-generated---that is, 
there exist two elements of $\psl{2}{q}$ of orders $2$ and $p$, respectively, that generate
$\psl{2}{q}$.
\end{abstract}
	
\maketitle

\section{Introduction}

There is a large body of work concerning the generation of finite groups by two elements, 
including the well-known theorems (i) that every finite non-abelian simple group is $2$-generated
\cite{aschbacher-guralnick1984,miller1901,steinberg1962} and (ii) that
the probability $p(G)$ that two randomly selected elements of a finite group $G$ will generate $G$ satisfies
$p(G)\rightarrow 1$ as $|G|\rightarrow\infty$ \cite{kantor--lubotzky1990,liebeck--shalev1995}. 
It is often important to know some special types of generating pairs,
such as requiring generators to be of some prescribed order, or that generators be reflective of some geometric property. It is the former requirement
that concerns us in this paper.

A group $G$ is said to be $(a,b)$-generated if there are two elements in $G$ 
of orders $a$ and $b$, respectively, that generate $G$.
The only finite groups generated by two involutions ({\it i.e.}, elements of order $2$) are the dihedral groups, 
while every non-abelian finite simple group is generated by an involution and an element of 
prime order \cite{king2017}. In such cases, the smallest pair $(2,p)$, 
with $p$ prime, of interest is the case of $p=3$, for which it is 
known that there are $2^{\aleph_0}$ isomorphism classes of simple $(2,3)$-generated groups \cite{mason--pride1990}.
Among the known $(2,3)$-generated groups are: the alternating groups $A_n$, for $n\geq 9$ \cite{miller1901};
the sporadic simple groups, with the exception of four of them \cite{wolder1989}; and the projective special  linear groups $\psl{2}{q}$,
for $q\neq 9$, and $\psl{3}{q}$, for $q\neq 4$.

With primes $p\geq 5$, the literature concerning $(2,p)$-generated groups is not as rich as that for 
$(2,3)$-generated groups. Therefore, the purpose of this paper is to augment our knowledge by giving a complete characterisation of 
the $(2,p)$-generated projective special linear groups $\psl{2}{q}$, for primes $p\geq 5$. 
There are no assumed relations between $q$, which is necessarily a power of a prime, and $p$.

Our main result is:

\begin{theorem}\label{main-theorem-1}
Let $p\geq 5$ be a fixed prime number, and suppose 
$q\geq 4$ is a power of a prime number. The group
$\mbox{\rm PSL}_2(q)$ is $(2,p)$-generated if and only if one of the following conditions is satisfied:
	\begin{enumerate}
		\item[(i)] $q\equiv0\pmod p$;
		\item[(ii)] $q\equiv1\pmod p$; or
		\item[(iii)] $q\equiv-1\pmod p$.
	\end{enumerate}
\end{theorem}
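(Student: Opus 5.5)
Necessity is short and I would do it first. If $\psl{2}{q}$ is $(2,p)$-generated, it contains an element of order $p$, so $p$ divides $|\psl{2}{q}|=q(q^2-1)/\gcd(2,q-1)$. Since $p\geq 5$ is odd, this forces $p\mid q$, $p\mid q-1$ or $p\mid q+1$, which are exactly conditions (i)--(iii).

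For sufficiency I would work in $\mathrm{SL}_2(q)$ and use Dickson's classification of subgroups of $\psl{2}{q}$. The maximal subgroups are: Borel subgroups of order $q(q-1)/d$; dihedral groups of order $2(q\pm1)/d$; subfield groups $\psl{2}{q_0}$ or $\mathrm{PGL}_2(q_0)$ with $q=q_0^r$; and the exceptional $A_4$, $S_4$, $A_5$. Here $d=\gcd(2,q-1)$. The plan is to exhibit an involution $x$ and an element $y$ of order $p$ whose product $xy$ has a suitably chosen trace $\tau$, and then show that $\langle x,y\rangle$ lies in no maximal subgroup.

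The construction I would use is the standard trace parametrisation. Fix $y$ with trace $\lambda+\lambda^{-1}$ in case (ii) or (iii), where $\lambda$ is a primitive $p$-th root of unity in $\mathbb{F}_q$ or in $\mathbb{F}_{q^2}$ respectively. In case (i) take $y$ unipotent. Then take $x$ of trace $0$ (order $2$ in $\psl{2}{q}$, or an involution directly when $q$ is even). By the Macbeath/Fricke trace theory, for every $\tau\in\mathbb{F}_q$ there is a pair $(x,y)$ in $\mathrm{SL}_2(q)$ with these traces and $\operatorname{tr}(xy)=\tau$. The pair generates an absolutely irreducible subgroup as soon as the commutator trace $\operatorname{tr}[x,y]=\operatorname{tr}(x)^2+\operatorname{tr}(y)^2+\tau^2-\operatorname{tr}(x)\operatorname{tr}(y)\tau-2\neq 2$. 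This rules out the Borel subgroups, except for finitely many values of $\tau$.

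I would then eliminate the remaining maximal subgroups one class at a time.
\begin{enumerate}
\item[(a)] Dihedral overgroups. These contain elements of order $p$ only inside their cyclic part, and then any two noncommuting generators force specific traces, namely $\tau=0$. So excluding $\tau=0$ and the Borel values suffices.
\item[(b)] $A_4$, $S_4$, $A_5$. Since $p\geq5$, only $A_5$ with $p=5$ is possible. Here $\tau$ would have to be a trace of an element of order $1,2,3,5$ in the binary icosahedral group, so only a bounded number of values of $\tau$ are excluded.
\item[(c)] Subfield groups. Choose $\tau$ so that $\mathbb{F}_p(\operatorname{tr}y,\tau)=\mathbb{F}_q$, since the traces of the generators determine the minimal field of definition. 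For example, take $\tau$ a generator of $\mathbb{F}_q$ over the prime field. The number of $\tau$ lying in proper subfields is at most $\sum_{r} q^{1/r}\le 2\sqrt q$, and $\mathrm{PGL}_2(q_0)$ is handled similarly via squared traces.
\end{enumerate}

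The main obstacle is the counting argument together with the small-$q$ cases. Good values of $\tau$ number at least $q-2\sqrt q-c$ for a small constant $c$, and this is positive for $q$ beyond a small bound. The finitely many remaining $q$, those with small $q$ where $p\mid q(q^2-1)$, such as $q=4,5,9,11$ with $p=5$, or $q=7,8,13$ with $p=7$, I would check directly, either by hand or with a short computation. Special care is needed when $\mathbb{F}_q$ is generated by $\operatorname{tr}y$ only over a subfield, and in characteristic $2$, where traces of involutions vanish. In characteristic $2$ I would instead use $\tau$ with $\mathbb{F}_2(\tau)=\mathbb{F}_q$ directly.
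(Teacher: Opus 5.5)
Your proposal is correct, but it is organised differently from the paper's proof.

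\textbf{What the paper does.} It fixes $g=\begin{pmatrix}0&1\\-1&0\end{pmatrix}$ and writes down an explicit $h$ of order $p$, depending on a primitive element $b$: upper triangular in cases (i) and (ii), companion-type in case (iii). It then rules out each class of maximal subgroups by direct matrix computation, and handles $q\in\{4,5,7,9,11,13\}$ by quoting known presentations and Macbeath's results.

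\textbf{What you do.} You use Macbeath's theorem that every trace triple is realised in $\mathrm{SL}_2(q)$, and reduce everything to the single parameter $\tau=\operatorname{tr}(xy)$. The conditions are: $\tau^2\neq 4-\operatorname{tr}(y)^2$ for Borel subgroups; $\tau\neq 0$ for dihedral ones; finitely many exclusions for $A_5$; and a trace-field condition for subfield subgroups. A count plus a finite check then finishes.

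\textbf{How the two relate.} The paper's construction is a special case of yours. Indeed $\operatorname{tr}(hg)=-b$ in cases (i) and (ii), and $\operatorname{tr}(hg)=b+b^{-1}$ in case (iii). Taking $b$ primitive is just a concrete way of placing $\tau$ outside every proper subfield.

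\textbf{What each route buys.} Your formulation is uniform, and it supplies two things the paper's direct argument glosses over.
\begin{enumerate}
\item[(1)] In case (ii), $h$ has a second eigenline $\langle (b,\,r^{-1}-r)^{T}\rangle$. The generator $g$ fixes it exactly when $b^2=-(r-r^{-1})^2$. This does happen for a $b$ satisfying all of the paper's conditions, e.g.\ $q=61$, $r=2^{12}=9$, $b=30$. Your commutator condition is precisely this exclusion.
\item[(2)] What excludes \emph{all} conjugates of the subfield subgroups is the trace of $xy$, not the observation $h\notin\mathrm{PSL}_2(q_0)$. For instance, in case (i) $h$ is $\mathrm{PGL}_2(q)$-conjugate to $\begin{pmatrix}1&1\\0&1\end{pmatrix}\in\mathrm{PSL}_2(p)$.
\end{enumerate}
In return, the paper's route gives explicit generators and needs neither Macbeath's existence theorem nor a counting threshold.

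\textbf{Two small corrections.} First, write the prime field as $\mathbb{F}_\ell$: $p$ is not the characteristic in cases (ii) and (iii). Second, for odd square $q$ the $\mathrm{PGL}_2(\sqrt q)$ condition $\tau^2\notin\mathbb{F}_{\sqrt q}$ alone excludes $2\sqrt q-1$ values. So your bound $q-2\sqrt q-c$ must also absorb $\sum_{r\geq 3}q^{1/r}$, with $r$ running over the odd prime divisors of $[\mathbb{F}_q:\mathbb{F}_\ell]$. This is harmless: a short count shows the only pairs needing direct verification are $(q,p)\in\{(4,5),(5,5),(9,5),(11,5)\}$.
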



\section{Proof of Theorem \ref{main-theorem-1}}

We first settle on some notation. Throughout, $\mathbb F_q$ denotes a finite field with $q$ elements; thus, $q$ is necessarily a power of some prime number.
Therefore, we also assume throughout that $q$ is a power of a prime.
The group of invertible $2\times2$ matrices with entries in $\mathbb{F}_q$ is
 $\operatorname{GL}_2(q)$, and $\operatorname{SL}_2(q)$ denotes 
 the subgroup of $\operatorname{GL}_2(q)$ consisting of matrices with determinant equal to $1$. 
 We shall use the notation $\psl{2}{q}$ for the quotient group of $\operatorname{SL}_2(q)$ by its centre, which is $\{\pm I\}$. Note if $q$ is even, then $\psl{2}{q}$  agrees with the group $\operatorname{SL}_2(q)$ itself. The order of $\psl{2}{q}$ is given by
\begin{align*}
	\frac{q(q-1)(q+1)}{\gcd(2, q-1)}.
\end{align*}
Note that, the elements of $\psl{2}{q}$ are cosets of the center of $\operatorname{SL}_2(q)$. However, throughout the article by abuse of notation, we will view them as matrices, while keeping in mind that the matrices $A$ and $-A$ are equal in $\psl{2}{q}$. 
Two matrices $A, B\in \psl{2}{q}$ are said to be conjugate if there exists an invertible matrix $P\in \psl{2}{q}$ such that $PAP^{-1}=B$. We mention a known fact that for any prime power $q$, any element in $\psl{2}{q}$ is conjugate to one of the following four elements: \begin{equation}\label{conjugate_q_odd} \begin{pmatrix}
	r& 0\\
	0 &r^{-1}
\end{pmatrix}, \begin{pmatrix}
	s & 1\\
	0 &s
\end{pmatrix},
\begin{pmatrix}
	s & \Delta\\
	0 &s
\end{pmatrix}, \begin{pmatrix}
0 & -1\\
1 &\theta+\theta^q
\end{pmatrix}
,\end{equation} where $r,s\in\mathbb F_q^*$ and $s=\pm 1$,  $\Delta$ is a non-square element of $\mathbb F_q$ (i.e. $\Delta\in\mathbb F_q\setminus\{x^2 : x\in\mathbb F_q\}$), and $\theta\in \mathbb{F}_{q^2}\setminus\mathbb F_q$ with the condition that $\theta^q=\theta^{-1}$. This has been proved for $\operatorname{SL}_2(q)$ in \cite{james--liebeck2001} (also see \cite[Remark 2.1]{adan-bante--harris2012}). But,
if $A$ and $B$ are conjugate in $\operatorname{SL}_2(q)$, then their images are also conjugate in $\psl{2}{q}$, consequently the conjugacy classes of elements is $\operatorname{SL}_2(q)$ and $\psl{2}{q}$ are essentially the same. 

We remark that if $q$ is a power of $2$, then $\mathbb F_q$ does not contain any non-square element, 
and so, in this case, any element in $\psl{2}{q}$ is conjugate to only three types of matrices as mentioned in \eqref{conjugate_q_odd},
with the exception of $\begin{pmatrix}
	s & \Delta\\
	0 & s
\end{pmatrix}$, where $s=\pm 1$. But, for the uniformity of the arguments, we will consider all four cases and deal with even and odd powers
together, with the knowledge that $\mathbb F_q$ does not contain any non-square element if $q$ is a power of $2$.

Recall the only groups which are $(2,2)$-generated are dihedral groups, and 
$\mbox{\rm PSL}_2(q)$ is $(2,3)$-generated if and only if $q\neq 9$. 
Therefore, we will work with a fixed prime $p\geq 5$ in the rest of this section. There is no assumed relationship between $q$ and $p$, 
and $q$ may be the power of a prime other than $p$.

If $\psl{2}{q}$ contains at least one element of order $p$, then, by Cauchy's theorem, $p$ must divide $\frac{q(q-1)(q+1)}{\gcd(2, q-1)}$. 
Consequently, it follows that $p \mid q$, $p \mid (q-1)$, or $p \mid (q+1)$. These cases correspond exactly to 
$q \equiv 0 \pmod{p}$, $q \equiv 1 \pmod{p}$, or $q \equiv -1 \pmod{p}$, respectively. Our main result, Theorem~\ref{main-theorem-1}, is that these 
necessary conditions on $q$ 
are also sufficient for the $(2,p)$-generation of $\psl{2}{q}$, leading to a complete determination of
the $(2,p)$-generated groups, for primes $p\geq 5$, among the projective special linear groups $\mbox{\rm PSL}_2(q)$.

The proof of Theorem \ref{main-theorem-1} is in two parts: the first part for certain low
values of $q$, and the second part for all other values of $q$ (namely $q=8$ and $q\geq 16$).
We treat the pertinent lower values of $q$ first.

\begin{proposition}\label{prop1}
	Let $q \in \{3,4,5,7,9,11,13\}$. The group $\psl{2}{q}$ is $(2,p)$-generated, for a prime $p\geq 5$, if and only if  
	\begin{enumerate}
		\item[(i)] $q\equiv0\pmod p$;
		\item[(ii)] $q\equiv1\pmod p$; or
		\item[(iii)] $q\equiv-1\pmod p$.
	\end{enumerate}
\end{proposition}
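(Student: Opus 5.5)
The necessity direction is already settled by the Cauchy argument given just before the statement: if $\psl{2}{q}$ is $(2,p)$-generated it contains an element of order $p$, so $p$ divides $q(q-1)(q+1)/\gcd(2,q-1)$, and hence one of (i)--(iii) holds. For sufficiency I will go through the seven listed values of $q$ one at a time. For each, I first determine the primes $p\geq 5$ that satisfy (i)--(iii), and then exhibit an explicit generating pair of orders $2$ and $p$.

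First I list the relevant primes. For $q=3$ there is none, since $p$ would have to divide $2,3$ or $4$; the statement holds vacuously, and indeed $\psl{2}{3}\cong A_4$ has no element of order $p\geq5$. The remaining cases are:
\begin{itemize}
\item $q=4$: $p=5$;
\item $q=5$: $p=5$;
\item $q=7$: $p=7$;
\item $q=9$: $p=5$;
\item $q=11$: $p=5,11$;
\item $q=13$: $p=7,13$.
\end{itemize}
So only finitely many pairs $(q,p)$ need checking.

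Where possible I will use standard isomorphisms:
\begin{itemize}
\item $\psl{2}{4}\cong\psl{2}{5}\cong A_5$, which is generated by $(1\,2)(3\,4)$ and $(1\,2\,3\,4\,5)$; their product $(1\,3\,5)$ makes this a $(2,5,3)$ triangle generation.
\item $\psl{2}{7}\cong\psl{3}{2}$ is $(2,3,7)$-generated (Hurwitz), so it is $(2,7)$-generated.
\item $\psl{2}{9}\cong A_6$ is generated by $(1\,2)(3\,4)$ and $(2\,3\,4\,5\,6)$. I will check transitivity and primitivity; a primitive subgroup of $S_6$ containing a $5$-cycle and an even involution is $A_5$ (acting on $6$ points) or $A_6$. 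I will need to choose the pair so that it is not contained in a transitive $\psl{2}{5}$. If that is awkward, I will instead give explicit matrices over $\mathbb F_9$.
\end{itemize}

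For $q=11$ and $q=13$ I will use explicit matrices. I take the involution $x=\begin{pmatrix}0&-1\\1&0\end{pmatrix}$ and an element $y$ of order $p$:
\begin{itemize}
\item for $p=q$, the unipotent $y=\begin{pmatrix}1&1\\0&1\end{pmatrix}$;
\item for $p\mid q\pm1$, a matrix of the form $\begin{pmatrix}a&b\\c&d\end{pmatrix}$ whose trace $t$ satisfies $t=\zeta+\zeta^{-1}$ with $\zeta$ of order $p$ (or $2p$ up to sign) in $\mathbb F_{q^2}$.
\end{itemize}
To prove that $\langle x,y\rangle$ is the whole group, I will invoke Dickson's classification of maximal subgroups of $\psl{2}{q}$: the Borel subgroups, the dihedral groups $D_{q\pm1}$, and $A_4$, $S_4$, $A_5$. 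Orders rule out most of these at once. An element of order $q$ lies only in a Borel subgroup, and $x$ does not fix the same point of the projective line as $y$, so $\langle x,y\rangle$ is not in a Borel. For $p=q=11$ the subgroup $A_5$ must be excluded separately, and for $p=7$, $q=13$ the dihedral group $D_{14}$. These I will exclude by checking the order of $xy$ through its trace: $\mathrm{tr}(xy)$ determines the order of $xy$, so I choose $y$ so that $xy$ has an order incompatible with every proper subgroup containing elements of order $p$. For example, in $D_{14}$ any product of an involution with an order-$7$ element is again an involution, so $xy$ of order $\neq 2$ (with $x$ not in the cyclic part) suffices.

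The main obstacle is this exclusion of proper overgroups, especially $A_5$ inside $\psl{2}{11}$ when $p=5$ or $11$, and the $A_6$ case where $\psl{2}{5}$-subgroups exist. I will handle it uniformly by picking the trace of $xy$ so that $xy$ has order $6$ in $\psl{2}{11}$, or order $7$ or $13$ as appropriate, none of which occurs in $A_5$ or in the relevant dihedral and Borel subgroups. If a parameter choice fails, a direct small computer check of group orders (\emph{e.g.}\ in GAP) will certify each of these finitely many cases.
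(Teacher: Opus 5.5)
Your proposal is correct. For $q\in\{3,4,5,7\}$ it essentially coincides with the paper: the order count for $q=3$, $\psl{2}{4}\cong\psl{2}{5}\cong A_5$, and a cited generation result for $\psl{2}{7}$ (you use Hurwitz $(2,3,7)$-generation, the paper cites Behr--Mennicke). For $q=9,11,13$ you take a different route.

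\begin{itemize}
\item The paper reads generators off known presentations: the standard presentation of $A_6$, where $a$ and $ab$ have orders $2$ and $5$; Behr--Mennicke for $p=q\in\{11,13\}$; the observation $(b^3a)a=b^3$, so that $b\in\langle a,b^3a\rangle$, for $(q,p)=(11,5)$; and Macbeath's theorem for $(13,7)$.
\item You use a primitivity check in $S_6$ for $q=9$, and Dickson's subgroup list plus element orders for $q=11,13$. This is the same method the paper uses for $q=8$ and $q\ge 16$.
\end{itemize}

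The paper's version is shorter but leans on cited presentations and Macbeath's theorem. Yours needs only Dickson's list and can be checked by hand.

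The verifications you defer all succeed.

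\begin{itemize}
\item For $x=(1\,2)(3\,4)$ and $y=(2\,3\,4\,5\,6)$, the element $xy^2$ is a $4$-cycle times a disjoint transposition. It has order $4$, which $A_5$ lacks, so $\langle x,y\rangle=A_6$.
\item For $q=11,13$, take $x=\begin{pmatrix}0&-1\\1&0\end{pmatrix}$ and $y=\begin{pmatrix}0&1\\-1&t\end{pmatrix}$. Then $xy=\begin{pmatrix}1&-t\\0&1\end{pmatrix}$ has order $q$.
\item Over $\mathbb F_{11}$ take $t=3$: the eigenvalues of $y$ are $9$ and $5$, both of order $5$.
\item Over $\mathbb F_{13}$ take $t=7$: this is a root of $s^3+s^2-2s-1$, the minimal polynomial of $\zeta+\zeta^{-1}$ for $\zeta$ a primitive $7$th root of unity, so $y$ has order $7$.
\item In both cases $|\langle x,y\rangle|$ is divisible by $2pq$. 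None of the candidate overgroups (Borel of order $q(q-1)/2$, dihedral of order $q\pm1$, $A_4$, $S_4$, $A_5$) has order divisible by $2pq$.
\end{itemize}

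This is the paper's $(11,5)$ idea, an involution times an element of order $p$ being unipotent, with an order count replacing the appeal to a presentation.

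One slip to correct: the case where $A_5$ must be excluded separately is $(q,p)=(11,5)$, not $p=q=11$, since $A_5$ has no element of order $11$.
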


\begin{proof}
	We already observed above that if $\psl{2}{q}$ is  $(2,p)$-generated, then $p$ must be an odd prime divisor of the order of $\psl{2}{q}$, so $q\equiv0\pmod p$, $q\equiv1\pmod p$ or $q\equiv-1\pmod p$. Conversely, we show that for every $q \in \{3,4,5,7,9,11,13\}$ and for each odd prime divisor $p\ge 5$ satisfying (i)-(iii), $\psl{2}{q}$ is $(2,p)$-generated by consideration of the following cases.
		
		Note that the order of $\psl{2}{3}$ is $12$, so the only odd prime divisor is $3$, and it cannot be $(2,p)$-generated for any prime $p\geq 5$. 
		
		The groups $\psl{2}{4} \cong A_5$ has 
		a presentation $\langle a,b| a^2 = b^5 = (ba)^3 = 1 \rangle$, so it is $(2,5)$-generated. Since the largest order of element in $A_5$ is $5$, so $\psl{2}{4}$ is not $(2,p)$-generated for $p>5$.
		Similarly, $\psl{2}{5}\cong A_5$ is $(2,5)$-generated, and is not $(2,p)$-generated for $p>5$.
		
		The group $\psl{2}{7}$ is $(2,7)$-generated \cite{behr--mennicke1968}. Moreover, we note that the order of $\psl{2}{7}$ is $168$, 
		which has only $3$ and $7$ as the odd prime divisors; hence, for $p\geq 5$, the group $\psl{2}{7}$ is $(2,p)$-generated only for $p=7$.
		
		The group $\psl{2}{9}\cong A_6$ and  has a  presentation
		\begin{align*}
			\left\langle a,b\ | a^2 = b^4 = (ab)^5 = (ab^2)^5 = 1 \right\rangle,
		\end{align*}
		so it is clearly $(2,5)$-generated. Furthermore, $\psl{2}{9}$  has order $360$, which
		has only $3$ and $5$ as the the odd prime divisors, and so $\psl{2}{9}$  is not $(2,p)$-generated for any prime $p>5$.
		
		The prime divisors of the order of $\psl{2}{11}$ are $3$, $5$, and $11$; therefore, for $p\geq 5$, $\psl{2}{11}$ can be $(2,p)$-generated only if $p=5$ or $p=11$. This proves the necessity of the conditions. By \cite{behr--mennicke1968}, the group $\psl{2}{11}$ has presentation
		\begin{align*}
			\left\langle a,b\ | a^2 = b^{11} = (ba)^3 = (b^2ab^6a)^3 = 1 \right\rangle,
		\end{align*}
		where we can choose $a=\begin{pmatrix}
			0&-1\\1&0
		\end{pmatrix}$ and $b=\begin{pmatrix}
		1&1\\0&1
	\end{pmatrix}$. So, $\psl{2}{11}$ is $(2,11)$-generated. To prove the sufficiency of conditions for $\psl{2}{11}$, it remains to prove that it is $(2,5)$-generated. To do this, we let 
	\[
	G = \psl{2}{11}=\left\langle a,b\ | a^2 = b^{11} = (ba)^3 = (b^2ab^6a)^3 = 1 \right\rangle
	\]
	 and $H = \langle a,b^3a \rangle$ with $a$ and $b$ as defined above. We note that $(b^3a)a = b^3$, and since $b$ has order $11$, we deduce that $\langle b \rangle = \langle b^3\rangle \leq \langle a,b^3a \rangle = H$. Therefore, $H = G$.  Moreover, the element $b^3a=\begin{pmatrix}
	3&-1\\1&0
\end{pmatrix}$ has order $5$, so $G$ is $(2,5)$-generated. 
		
		The prime divisors of the order of $\psl{2}{13}$ are $3$, $7$, and $13$, and
		so for $p\geq 5$, $\psl{2}{13}$ can be $(2,p)$-generated only for $p=7$ and $p=13$. 
		By the presentation of $\psl{2}{13}$ given in \cite{behr--mennicke1968}, $\psl{2}{13}$ is clearly $(2,13)$-generated. 
		The $(2,7)$-generation of $\psl{2}{13}$ follows from \cite[Theorem 8]{macbeath1969}.

These cases above establish the claim.
\end{proof}

To prove our main result for all remaining values of $q$, we begin with the following lemmas.

\begin{lemma}\label{prop} Let $p\geq 5$ be a fixed prime and suppose 
$q \equiv 0 \pmod{p}$, $q \equiv 1 \pmod{p}$, or $q \equiv -1 \pmod{p}$. If $A\in\psl{2}{q}$ is of order $p$, then, up to conjugation, $A$ has one of the
following forms:
	\begin{enumerate}
		
			\item[(a)] if $q\equiv 0 \pmod{p}$, then  \begin{align*}
				A\in
				\left\{\begin{pmatrix}
					1 & 1\\
					0 &1
				\end{pmatrix},
				\begin{pmatrix}
					1 & \Delta\\
					0 &1
				\end{pmatrix}\right\},
			\end{align*}
			where $\Delta$ is a non-square in $\mathbb{F}_{q}$;
			
		\item[(b)]  if $q\equiv 1 \pmod{p}$, then 
		\begin{align*}
			A=
			\begin{pmatrix}
				r& 0\\
				0 &r^{-1}
			\end{pmatrix},
		\end{align*}
		where $r\in \mathbb{F}_q$ such that $r^p = 1$;
		
		\item[(c)] if $q\equiv -1 \pmod{p}$, then 
		\begin{align*}
		A=
		\begin{pmatrix}
			0 & -1\\
			1 &\theta+\theta^{-1}
		\end{pmatrix}
		,
	\end{align*}
		where $\theta\in \mathbb{F}_{q^2}\setminus\mathbb F_q$, $\theta+\theta^{-1}\in\mathbb F_q$, and $\theta^{q}=\theta^{-1}$. 
		
	\end{enumerate}
\end{lemma}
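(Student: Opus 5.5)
The plan is to start from the classification of conjugacy classes recorded in \eqref{conjugate_q_odd}: every element of $\psl{2}{q}$ is conjugate to a diagonal matrix $\operatorname{diag}(r,r^{-1})$, to a unipotent-type matrix $\begin{pmatrix} s&1\\0&s\end{pmatrix}$ or $\begin{pmatrix} s&\Delta\\0&s\end{pmatrix}$ with $s=\pm1$, or to the elliptic matrix $\begin{pmatrix}0&-1\\1&\theta+\theta^{q}\end{pmatrix}$ with $\theta^{q}=\theta^{-1}$. For each type I will compute the possible orders in $\psl{2}{q}$ and check which can equal $p$. The three congruence hypotheses are mutually exclusive for $p\ge5$, since $p$ cannot divide two of $q,q\pm1$ (it would have to divide $1$ or $2$). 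So in each case exactly one type survives.

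First, the unipotent types. Since $A$ and $-A$ agree in $\psl{2}{q}$, we may take $s=1$. Then $\begin{pmatrix}1&c\\0&1\end{pmatrix}^n=\begin{pmatrix}1&nc\\0&1\end{pmatrix}$, which is $\pm I$ only when $nc=0$ in $\mathbb F_q$, because the diagonal is already $1$. Hence these elements have order equal to the characteristic $\ell$ of $\mathbb F_q$. Order $p$ therefore forces $\ell=p$, i.e.\ $q\equiv0\pmod p$. Conversely, when $p\mid q$ these are precisely the forms listed in (a), with the $\Delta$-form absent when $q$ is even.

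Second, the diagonal type. The order of $\operatorname{diag}(r,r^{-1})$ in $\psl{2}{q}$ divides the order of $r$ in $\mathbb F_q^*$, which divides $q-1$. Since $p$ is odd and $A^p=\pm I$, we get $r^p=\pm1$. If $r^p=-1$, replace $r$ by $-r$, which represents the same element of $\psl{2}{q}$, so that $r^p=1$. Thus $p\mid q-1$ and $A$ has the form in (b). Third, the elliptic type. The matrix $\begin{pmatrix}0&-1\\1&\theta+\theta^{-1}\end{pmatrix}$ has characteristic polynomial $x^2-(\theta+\theta^{-1})x+1$, so over $\mathbb F_{q^2}$ it is diagonalizable with eigenvalues $\theta^{\pm1}$. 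Its order is therefore governed by the order of $\theta$ in the norm-one subgroup $\{\theta:\theta^{q+1}=1\}$, which is cyclic of order $q+1$. Order $p$ forces $p\mid q+1$, giving (c). Here $\theta\notin\mathbb F_q$ holds because otherwise the class would be diagonal, and $\theta+\theta^{-1}=\theta+\theta^q\in\mathbb F_q$ as the trace.

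Finally, I will combine the three computations with the exclusivity observation. When $p\mid q$, neither diagonal nor elliptic elements can have order $p$, since $p\nmid q\pm1$, so only (a) occurs. When $p\mid q-1$, characteristic considerations and $p\nmid q+1$ rule out the other types, leaving (b). Likewise $p\mid q+1$ leaves (c). The one step needing care is the sign bookkeeping in $\psl{2}{q}$, namely passing from $A^p=\pm I$ to the normalized forms $r^p=1$ and $s=1$. This works because $p$ is odd, so the sign can be absorbed into the representative $A\mapsto -A$.
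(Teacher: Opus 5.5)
Your proposal is correct and takes essentially the same route as the paper. Like the paper, you run through the conjugacy-class representatives in \eqref{conjugate_q_odd} and show that unipotent elements have order equal to the characteristic, diagonal ones have order dividing $q-1$, and elliptic ones satisfy $A^{q+1}=I$ (you get this by diagonalizing over $\mathbb F_{q^2}$, the paper by noting that the minimal polynomial divides $t^{q+1}-1$), and your sign normalization ($s=1$ and $r^p=1$ via $A\mapsto -A$ with $p$ odd) and mutual-exclusivity remark make explicit what the paper leaves implicit.
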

\begin{proof} 
	With reference to \eqref{conjugate_q_odd} and because the order of $A\in\psl{2}{q}$ is an odd prime $p$, $A$ is conjugate to one of the following matrices:
	
	{\centering
		\begin{equation}\label{eq3}
			\left\{\begin{pmatrix}
				1 & 1\\
				0 &1
			\end{pmatrix},
			\begin{pmatrix}
				1 & \Delta\\
				0 &1
			\end{pmatrix}\right\}, \begin{pmatrix}
				r& 0\\
				0 &r^{-1}
			\end{pmatrix},  \, \mbox{or}\,\begin{pmatrix}
				0 & -1\\
				1 &\theta+\theta^q
			\end{pmatrix},
	\end{equation}}
where $\Delta$ is a non-square in $\mathbb{F}_{q}$, $r\in \mathbb{F}_q$ such that $r^p = 1$, and  $\theta\in \mathbb{F}_{q^2}\setminus\mathbb F_q$
satisfies $\theta+\theta^{-1}\in\mathbb F_q$ and $\theta^{q}=\theta^{-1}$.  
	
	We now compute the order of each type of matrices in \eqref{eq3}.
	
	\textbf{Case (i)}. Assume $A\in
	\left\{\begin{pmatrix}
		1 & 1\\
		0 &1
	\end{pmatrix},
	\begin{pmatrix}
		1 & \Delta\\
		0 &1
	\end{pmatrix}\right\}$. We note that $$\begin{pmatrix}
		1 & 1\\
		0 &1
	\end{pmatrix}^p = \begin{pmatrix}
	1 & p\\
	0 &1
	\end{pmatrix}\quad\text{ and }\quad\begin{pmatrix}
	1 & \Delta\\
	0 &1
	\end{pmatrix}^p=\begin{pmatrix}
	1 & p\Delta\\
	0 &1
	\end{pmatrix}.$$  Thus, if $A$  is of order $p$, then in the first case $p=0\in\mathbb F_q$, and in the second case $p\Delta=0\in\mathbb F_q$. 
	But, as $\Delta$ is non-square, we have $p=0$ in both cases, which is only possible in the case $q\equiv 0 \pmod{p}$. 
	
	\textbf{Case (ii)}. If $A=\begin{pmatrix}
		r & 0\\
		0 & r^{-1}
	\end{pmatrix}$, then the order of $A$ is $p$ if and only if the order of $r$ is $p$. Since $r\in\mathbb F_q^*$, we also know that the order of $r$ must divide $q-1$; thus, $p$ must divide $q-1$, which is only possible in the case $q\equiv 1 \pmod{p}$. 
	
	\textbf{Case (iii)}. Suppose 
			\[
			A=\begin{pmatrix}
				0 & -1\\
				1 & \theta+\theta^{q}
			\end{pmatrix},
			\]
			where $\theta \in \mathbb{F}_{q^2}\setminus\mathbb F_q$ satisfies $\theta^{q+1}=1$. We note that the minimal polynomial of $A$ is
			\[
			m_A(t)=t^2-(\theta+\theta^{q})t+1.
			\]
			This also shows that the eigenvalues of $A$ are $\theta$ and $\theta^q$. Furthermore, since $\theta$ and $\theta^{q}$ are roots of $t^{q+1}-1$, it follows that
			\[
			m_A(t)\mid (t^{q+1}-1).
			\]
			So, there exists a polynomial $g(t)\in \mathbb{F}_q[t]$ such that $A^{q+1}-I=g(A)(A^2-(\theta+\theta^{q})A+I)$. However, $(A^2-(\theta+\theta^{q})A+I)=0$ as it is the minimal polynomial of $A$, and so $A^{q+1}=I$. Hence, if the matrix $A$ has order $p$, then we must have that $p | (q+1)$ which is only possible in the case when $q\equiv -1 \pmod{p}$.

	Finally, we note that the Cases (i)-(iii) gives the required conclusions in (a)-(c).  This completes the proof.
\end{proof}

For any prime power $q$, we recall that the non-zero elements of $\mathbb{F}_q$ form a multiplicative group, which is cyclic of order $q-1$. A non-zero element of $\mathbb{F}_q$ is primitive if it is a generator of this multiplicative group. 
The following lemma will also turn out to be useful for our purposes.

\begin{lemma}\label{number_generators}
	Except $q=2,3,4,5,7,9,11,13$, the number of primitive elements in $\mathbb F_q$ is at least $5$.
\end{lemma}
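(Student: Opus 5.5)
The number of primitive elements of $\mathbb F_q$ is $\varphi(q-1)$, since $\mathbb F_q^*$ is cyclic of order $q-1$ and a cyclic group of order $n$ has exactly $\varphi(n)$ generators. The lemma therefore reduces to an elementary statement about Euler's totient: for every prime power $q\notin\{2,3,4,5,7,9,11,13\}$ we have $\varphi(q-1)\ge 5$. Since $\varphi(n)$ is even for $n\ge 3$, this is the same as $\varphi(q-1)\ge 6$ once $q\ge 4$, equivalently $\varphi(q-1)\notin\{1,2,4\}$.

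The first step is to classify all $n$ with $\varphi(n)\le 4$. Write $n=\prod p_i^{a_i}$, so that $\varphi(n)=\prod p_i^{a_i-1}(p_i-1)$. A prime $p_i\ge 7$ already contributes a factor at least $6$. Hence only the primes $2,3,5$ can occur, and the exponents are bounded: $2^{a}$ with $a\le 3$, $3^{a}$ with $a\le 1$, and $5^{a}$ with $a\le 1$. A finite check then gives the standard list
$$\varphi(n)\le 4\iff n\in\{1,2,3,4,5,6,8,10,12\}.$$
This is the main (if modest) step; I would present it as the exponent bound followed by a short enumeration of the products.

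Finally, $q-1$ must lie in this list with $q$ a prime power. The candidates are $q\in\{2,3,4,5,6,7,9,11,13\}$. Removing $q=6$, which is not a prime power, leaves exactly $\{2,3,4,5,7,9,11,13\}$, the excluded set. For every other prime power $q$ we get $\varphi(q-1)\ge 5$, since $\varphi(q-1)$ is even and exceeds $4$. This proves the lemma, and it matches the small cases handled separately in Proposition~\ref{prop1}.
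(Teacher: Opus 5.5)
Your proof is correct, and it takes a genuinely different and cleaner route from the paper's.

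The paper works from the side of $q$. It splits into $q$ even and $q$ odd and factors $q-1$. It then has to rule out the cases where $q-1$ is a prime power, which it does by invoking Catalan's conjecture (Mih\u{a}ilescu's theorem) for the equations $2^n-p_1^{n_1}=1$ and $s^l-2^{n_0}=1$, together with the structure of Fermat primes. This is followed by a fairly long case split on the exponents $n_0,n_1$ and the number of odd prime factors of $q-1$, with $q=8$ checked separately.

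You instead invert the problem. You first bound the primes and exponents that can occur in any $n$ with $\varphi(n)\le 4$. This reduces everything to a finite enumeration over $2^a3^b5^c$ with $a\le 3$ and $b,c\le 1$, which gives $\{1,2,3,4,5,6,8,10,12\}$. After that, the only arithmetic fact about $q$ you need is that $6$ is not a prime power.

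What your approach buys:
\begin{enumerate}
\item[(a)] It avoids Mih\u{a}ilescu's theorem entirely. That is a deep result, and the paper uses it for what is really a finite check.
\item[(b)] It is self-contained and easy to verify.
\item[(c)] It shows the exclusion list is sharp: the excluded $q$ are exactly the prime powers with $\varphi(q-1)\le 4$.
\end{enumerate}
The paper's organization by characteristic gains nothing in return for this lemma.

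One small simplification: your parity remark is harmless but unnecessary. Knowing $\varphi(q-1)>4$ already gives $\varphi(q-1)\ge 5$.
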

\begin{proof}
	The number of primitive elements in  $\mathbb{F}_q$ is equal to the number of generators of the group of unity of the field, which is cyclic of order $q-1$. Therefore, the number of primitive elements of $\mathbb{F}_q$ is  $\phi(q-1)$, where $\phi$ denotes the Euler Totient function.	We have $\phi(8-1) = 6$.
	Next, we show that $\phi(q-1)\geq5$, for $q\geq 16$ (equivalently $q>13$) through the following cases and we will be done.
\begin{enumerate}[(1)]
	\item Let $q=2^n$, where $n\geq4$. Then, $q-1=2^n-1$.
	\begin{enumerate}[(a)]
		\item If $2^n-1$ is a prime number, then
		\begin{equation*}
			\phi(q-1)=\phi(2^n-1)=(2^n-1)-1=2^n-2\geq5,
		\end{equation*}
		since $n\geq 4$.
		\item If $2^n-1$ is a composite number, then there exist odd prime numbers $3\leq p_1<p_2<\ldots< p_k$ for some $k\geq 1$ and positive integers $n_1,n_2,\ldots,n_k$ such that
		\begin{equation*}
			q-1=2^n-1=p_1^{n_1}p_2^{n_2}\ldots p_k^{n_k}.
		\end{equation*}
		Assume that $k=1$, i.e., $q-1=2^n-1=p_1^{n_1}$. We note that $n_1\geq 2$ since $q-1$ is composite. Since $n>1$ and $n_1>1$, the equation 
			\begin{align*}
				2^n-p_1^{n_1}=1,
			\end{align*} 
			does not admit any solution in $p_1$ due to the famous  Catalan's Conjecture which is now known as Mihăilescu's theorem. In order for $2^n-p_1^{n_1}=1$ to have a solution in $p_1$, one of $n$ and $n_1$ must be equal to $1$, which is impossible in this case since $n\geq4$ and $n_1\geq 2$. 
			
			Thus, $k\geq2$, i.e.,  $q-1=2^n-1=p_1^{n_1}p_2^{n_2}\ldots p_k^{n_k}$, where $3\leq p_1<p_2<\ldots <p_k$. Recall that the Euler totient $\phi$ is multiplicative for coprime numbers, that is, if $u$ and $v$ are coprime, then $\phi(uv) = \phi(u)\phi(v)$. Consequently, we have
			
			$$\phi(q-1) = \phi(p_1^{n_1})\phi(p_2^{n_2}) \ldots \phi(p_k^{n_k})\geq \phi(p_1^{n_1})\phi(p_2^{n_2}).$$
			
			Furthermore, since $\phi(p^n)\geq p^{n-1}(p-1)$ for all primes $p$ and $n\geq 1$, we get $$\phi(q-1) \geq (p_1-1)(p_2-1).$$
			Since $p_1\geq 3$ and $p_2\geq 5$, we have $$\phi(q-1) \geq 2\times 4\geq 8.$$
			
			Therefore, if $k\geq2$, then we have $\phi(q-1)\geq5$.
	\end{enumerate}
	Hence, when $q\geq 16$ is even, we have that $\phi(q-1)\geq5$.
	\item Assume that $q = s^l \geq 16$ for some  odd prime $s$ and an integer $l\geq 1$. Then, $q-1$ is even
	\begin{align*}
		q-1=2^{n_0}p_1^{n_1}p_2^{n_2}\ldots p_k^{n_k}, 
	\end{align*}
	for some odd primes $3\leq p_1<p_2<\ldots < p_k$, an integer $n_0\geq 1$, and non-negative integers $n_1,\ldots,n_k$. Now, we consider the following cases.
	\begin{enumerate}
		\item Assume first that $n_1 = n_2 = \ldots = n_k = 0$. Then, $q-1 = s^l-1 =2^{n_0}$, which implies that 
		 \begin{align*}
			s^l-2^{n_0}=1.
		\end{align*} 
		
		 If $l,n_0>1$, then by Mihăilescu's theorem, the only solution to $s^l-2^{n_0}=1$ is achieved when $n_0=3$ and $s=3$, and $l=2$. In other words, $q-1=2^3 = 8$, and thus $q=9$, which is not considered here as $q\geq16$.
		
		 Therefore, one of $n_0$ or $\ell$ is equal to $1$. If $n_0=1$, then $q-1=2^{n_0}=2$  meaning that $q=3$, which is excluded by assumption. If $l=1$, then $q-1=s-1=2^{n_0}$ which implies that 
		 \begin{align*}
		 	s=2^{n_0}+1,
		 \end{align*} 
		is a Fermat prime. For the Fermat number  $2^{n_0}+1$ to be a prime, the integer $n_0$ must be a power of $2$, so there exists an integer $m\geq 0$ such that $s = 2^{2^m}+1$. Since $q=s=2^{2^m}+1 \geq 16$, we deduce that $m\geq 2$. Hence, we have
		 \begin{align*}
		 	\phi(q-1) = \phi(s-1)=2^{2^m-1}(2-1)=2^{2^m-1}\geq 8.
		 \end{align*}
		 
		
	\item Assume that one of $n_1,n_2,\ldots,n_k$ is non-zero. Assume without loss of generality that $n_1\geq 1$. Then, we have
	\begin{align*}
		\phi(q-1)&=\phi(2^{n_0})\phi(p_1^{n_1})\phi(p_2^{n_2})\ldots \phi(p_k^{n_k}) \\
		&\geq \phi(2^{n_0})\phi(p_1^{n_1})= 2^{n_0-1}p_1^{n_1-1}(p_1-1).
	\end{align*}
	If $n_0,n_1>1$, then $\phi(q-1)\geq2\times3\times2=12$. 
	
	If $n_0=n_1=1$, then  we distinguish the cases whether $k\geq2$ or $k=1$.
	If $k\geq2$, then $\phi(q-1)\geq1\times1\times(p_1-1)p_2^{n_2-1}(p_2-1)\geq8$, as $p_2\geq5$. Now, if $k=1$, then we have that $q-1=2p_1$, and so $q=2p_1+1$. Therefore,  for $p_1=3$ or $p_1=5$ we have $q=7$ or $q=11$,  respectively, which are both excluded by assumption. Moreover, for $p_1\geq7$ we always have that $\phi(q-1)\geq 2^{n_0-1}p_1^{n_1-1}(p_1-1)\geq1\times1\times6\geq6 $. 
	
	If $n_0\geq3$ and $n_1=1$, then $\phi(q-1)\geq4\times2=8$.
	If $n_0 = 2$ and $n_1=1$, then  we distinguish the cases whether $p_1=3$ or $p_1\geq 5$.  If $p_1\geq5$, then $\phi(q-1)\geq2\times4=8$. If $p_1=3$, then $q-1=2^2\times 3=12$ and so $q=13$, which is excluded by assumption. 
	If $n_0=1$ and $n_1\geq2$, then $$\phi(q-1)\geq p_1^{n_1-1}(p_1-1) \geq  p_1(p_1-1)\geq 6.$$
	Therefore, again we have that $\phi(q-1)\geq5$.
		\end{enumerate}
\end{enumerate} 
Hence, we have that $\phi(q-1)\geq5$ in all cases. This completes the proof.
\end{proof}

The next proposition completes the proof of Theorem \ref{main-theorem-1}.

\begin{proposition}\label{main-theorem-part2}
Let $p\geq 5$ be a fixed prime number, and suppose $q$ is a prime power such that $q=8$ or $q\ge16$.
The group
$\mbox{\rm PSL}_2(q)$ is $(2,p)$-generated if and only if one of the following conditions is satisfied:
	\begin{enumerate}
		\item[(i)] $q\equiv0\pmod p$;
		\item[(ii)] $q\equiv1\pmod p$; or
		\item[(iii)] $q\equiv-1\pmod p$.
	\end{enumerate}
\end{proposition}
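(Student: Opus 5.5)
Necessity is immediate: an element of order $p$ exists only if $p$ divides $|\psl{2}{q}|$, which forces one of (i)--(iii). For sufficiency the plan is to fix an element $A$ of order $p$ in the normal form of Lemma~\ref{prop}, and look for an involution $B$ with $\langle A,B\rangle=\psl{2}{q}$. To prove generation we use Dickson's classification of the maximal subgroups of $\psl{2}{q}$:
\begin{itemize}
\item the Borel subgroups (upper triangular stabilisers of a point of the projective line);
\item the dihedral groups $D_{q\pm1}$ (up to the factor $\gcd(2,q-1)$);
\item the subfield subgroups $\psl{2}{q_0}$ and $\mathrm{PGL}_2(q_0)$, with $q=q_0^m$;
\item the exceptional subgroups $A_4$, $S_4$ and $A_5$.
\end{itemize}
So it suffices to pick $B$ so that $\langle A,B\rangle$ lies in none of these.

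The involution will be taken as
\[
B=\begin{pmatrix} x & y\\ z & -x\end{pmatrix},\qquad -x^2-yz=1 \quad (\text{for } q \text{ odd}),
\]
or as the analogous unipotent-type element when $q$ is even. The key free parameter is the trace of the product, $\tau=\operatorname{tr}(AB)$. The pair $(A,B)$ is determined up to conjugacy by the traces of $A$, $B$ and $AB$. Since $\operatorname{tr}(B)=0$, we can realise $\operatorname{tr}(AB)=\tau$ for essentially any prescribed $\tau\in\mathbb F_q$. In case (a) this is done by solving a linear equation in the entries of $B$; in cases (b) and (c) it is done by a similar direct computation with $A$ diagonal or in companion form.

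We choose $\tau$ so that $AB$ has order $q-1$ (or $(q-1)/2$ in $\psl{2}{q}$), that is, $\tau=\lambda+\lambda^{-1}$ with $\lambda$ a primitive element of $\mathbb F_q$. This is where Lemma~\ref{number_generators} enters. There are at least $5$ primitive elements, so at least $3$ admissible traces up to $\lambda\leftrightarrow\lambda^{-1}$. This leaves room to discard the finitely many bad values of $\tau$: those making $\langle A,B\rangle$ reducible, for which $\operatorname{tr}[A,B]=2$, and possibly a value giving a degenerate $B$.

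With this choice, containments in the maximal subgroups are excluded as follows.
\begin{enumerate}[(1)]
\item An element of order $(q-1)/\gcd(2,q-1)\ge 7$ cannot lie in $A_4$, $S_4$ or $A_5$. The exceptions are small cases such as $q=8$, where the order is $7$, and these are also excluded.
\item The order does not divide $q_0(q_0^2-1)$ for a proper subfield $\mathbb F_{q_0}$. More simply, $\operatorname{tr}(AB)$ generates $\mathbb F_q$ when $\lambda$ is primitive, so $AB$ is not conjugate into a subfield subgroup. This step may need care for $\mathrm{PGL}_2(q_0)$ and for the possibility $\mathbb F_p(\lambda+\lambda^{-1})\ne\mathbb F_q$; one may need to choose $\lambda$ with $\mathbb F_p(\lambda+\lambda^{-1})=\mathbb F_q$ as well.
\item The group is not contained in a Borel subgroup, by irreducibility, since $\operatorname{tr}[A,B]\ne2$.
\item The group is not contained in a dihedral group $D_{q\pm1}$. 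In such a group every element outside the cyclic part is an involution. The element $A$ has odd order $p\ge5$, so it would lie in the cyclic part $C$. Then $AB$ would have to be either in $C$ or an involution. If $AB$ is an involution, its order is $2$, not $(q-1)/\gcd(2,q-1)$. If $AB\in C$, then $B\in C$, which makes $\langle A,B\rangle$ abelian and hence reducible over $\overline{\mathbb F}_q$, a contradiction.
\end{enumerate}

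The main obstacle is the explicit trace and commutator computation in each of the three forms of $A$ from Lemma~\ref{prop}. One must verify that the conditions $\operatorname{tr}[A,B]\neq 2$ and $B$ being a genuine involution exclude at most $2$ of the available values $\lambda+\lambda^{-1}$; this is exactly why the bound $\phi(q-1)\ge5$ is needed. Case (c), where $A$ has irrational eigenvalues, and characteristic $2$ will require separate but analogous computations.
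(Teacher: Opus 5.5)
Your plan is sound, but it takes a genuinely different route from the paper's.

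The paper fixes the involution $g=\begin{pmatrix}0&1\\-1&0\end{pmatrix}$ and places a primitive element $b$ inside the order-$p$ element $h$. It then excludes each maximal class by explicit matrix checks:
invariant lines for Borel subgroups;
$ghg^{-1}\neq h^{\pm1}$ for dihedral groups;
the formulas for $(hg)^3$ and $(hg)^5$ for $A_5$;
and $b$ lying in no proper subfield, for subfield subgroups.

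You instead fix $A$ and choose the involution $B$ so that $AB$ generates a split torus, i.e. $\operatorname{tr}(AB)=\lambda+\lambda^{-1}$ with $\lambda$ primitive. In case (iii) the paper's $hg$, which is triangular with diagonal $b^{-1},b$, is exactly of this kind. In cases (i) and (ii) its $hg$ has trace $-b$ and its order is not controlled.

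Your choice buys the following.
\begin{itemize}
\item It kills $A_4$, $S_4$, $A_5$ and all subfield subgroups at once, by a conjugation-invariant order bound.
\item It kills the Borel class by the Fricke identity. For $\operatorname{tr}B=0$ this reads $\operatorname{tr}[A,B]=\operatorname{tr}(A)^2+\tau^2-2$, so the only bad values are those with $\tau^2=4-\operatorname{tr}(A)^2$. In the diagonal case the degenerate value with $yz=0$ is this same one. That is at most two bad values, against at least three available values of $\tau$, since $\phi(q-1)$ is even and $\geq 5$.
\end{itemize}
The conjugation invariance is a real gain. The paper's subfield step only shows that $h$ is not in the standard copy of $\psl{2}{q'}$. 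Covering conjugates of that subgroup requires a trace invariant such as $\operatorname{tr}(hg)$, which is exactly your tool. In return, the paper's route is fully explicit and needs no trace identities or irreducibility criterion.

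Two small repairs are needed.
\begin{itemize}
\item In (2), the claim that $(q-1)/\gcd(2,q-1)$ does not divide $q_0(q_0^2-1)$ is false: for $q=25$, $q_0=5$ one has $12\mid 60$. Argue with element orders instead. Every element of $\mathrm{PGL}_2(q_0)\supseteq\psl{2}{q_0}$ has order at most $q_0+1$, and $q_0+1<(q-1)/\gcd(2,q-1)$ whenever $q=q_0^m$ with $m\geq 2$ and $q\neq 4,9$.
\item Your worry about $\mathbb F_p(\lambda+\lambda^{-1})$ is unnecessary. If it were a proper subfield $\mathbb F_{q_0}$, then $\lambda$ would be quadratic over it with conjugate $\lambda^{q_0}=\lambda^{-1}$. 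Then $\lambda^{q_0+1}=1$, contradicting primitivity.
\end{itemize}
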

\begin{proof} As we noted earlier, we need only prove the sufficiency of the conditions. Recall that if $q=8$ or
$q\geq 16$, then Lemma \ref{number_generators} asserts the number of primitive elements in $\mathbb F_q$ is at least $5$. 
	
	We proceed by constructing an explicit pair of generators to show that $\psl{2}{q}$ is $(2,p)$-generated. 
	Note that it is sufficient to show that there exist elements $g, h \in \psl{2}{q}$ with $|g|=2$ and $|h|=p$ such that the subgroup generated by $g$ and $h$, $H = \langle g, h \rangle$, is not contained in any maximal subgroup of $\psl{2}{q}$. 
	
	For cases (i)-(iii), since we also have $q\neq 2,3$,  using the classification theorem \cite[Corollary~2.2]{king2005} (also see \cite{dickson-book}, \cite[\S~5.2]{fulton-harris-book} for details),  the maximal subgroups of $\psl{2}{q}$ are among one of the following groups:
	\begin{enumerate}
		\item Borel subgroups---that is, any subgroup of order $\frac{q(q-1)}{\gcd(2,q-1)}$ that is conjugate to the subgroup of upper triangular matrices in $\psl{2}{q}$;
		\item dihedral groups $D_{\frac{2(q \pm 1)}{\gcd(2,q-1)}}$; \label{dihedral}
		\item the exceptional groups $A_4, S_4$, or $A_5$;
		\item subgroups $\mbox{\rm PSL}_2(\mathbb {F}_{q'})$, where $q = (q')^t$ for a prime $t$;
		\item subgroups $\mbox{\rm PGL}_2(\mathbb {F}_{q'})$, where $q = (q')^2$.
	\end{enumerate}
	
\noindent\textbf{Case (i):} The assumption is that $q \equiv 0\pmod{p} $. 

By Lemma~\ref{prop} (a), an element of order $p$ in $\psl{2}{q}$ is conjugate to $
		\begin{pmatrix}
			1 & 1\\
			0 &1
		\end{pmatrix}$ or
		$\begin{pmatrix}
			1 & \Delta\\
			0 &1
		\end{pmatrix},$ where $\Delta$ is a non-square in $\mathbb F_q$. Define 
\[
g=\begin{pmatrix}
	0 & 1\\
	-1 & 0
\end{pmatrix} \quad
\mbox{and} \quad
h=\begin{pmatrix}
	1& b\\
	0 & 1
\end{pmatrix},
\]
where $b$ is a primitive element of $\mathbb{F}_q$ such that $b^4-3b^2+1\neq 0$.
The existence of such $b$ is proved as follows. Since the number of primitive elements in $\mathbb F_q$ in our cases is at least $5$, so we can  choose $b$ in $\mathbb{F}_q$ to be
an appropriate generator that does not satisfy the fourth degree equation $b^4-3b^2+1= 0$. 


We now show $H=\langle g,h\rangle$
is not contained in any of the five maximal subgroups (1)-(5) listed above.


\begin{enumerate}
	\item  We claim that $H=\langle g,h\rangle$ is not contained in a Borel subgroup of $\mathrm{PSL}_2(q)$. We prove this claim by contradiction. Assume that $H$ is contained in a Borel subgroup. Recall that a subgroup of $\mathrm{PSL}_2(q)$ is contained in a Borel subgroup if and only if is conjugate to a subgroup of the upper triangular matrices in $\psl{2}{q}$.
	For $H$, this means that it leaves a one-dimensional subspace of $\mathbb F_q^2$ invariant. 
	
	The matrix $h$ is an upper triangular matrix, and therefore preserves the one-dimensional subspace $\operatorname{Span}\{e_1\}$, and this is the only subspace that $h$ leaves invariant.
	Consequently, $H=\langle g,h\rangle$ must leave $\operatorname{Span}\{e_1\}$ 
	 invariant.		
	However, $g$
	does not preserve this subspace. So, $\langle g,h\rangle$ does not preserve any one-dimensional subspace, which contradicts the fact that it is a subgroup of a Borel subgroup. We deduce by contradiction that $H=\langle g,h\rangle$ is not a subgroup of any Borel subgroup.
	
	\item In the dihedral group $D_{2n}$, every element is either an element of the cyclic subgroup of order $n$, or an involution outside of this cyclic subgroup. Additionally, if $n$ is even, then there is a unique involution in the cyclic subgroup of order $n$, which is central in $D_{2n}$. 
	
	If $\langle g,h\rangle$ is a subgroup of a dihedral group of $\psl{2}{q}$, then $h$ lies in the cyclic part, and we either have $ghg^{-1} = h^{-1}$ (when $g$ is outside of the cyclic group) or $ghg^{-1} = h$ (when $g$ is the central involution of the cyclic group). However, since $b\neq 0$, we have
	\begin{align*}
		ghg^{-1} = 
	\begin{pmatrix}
		1 & 0 \\
		-b & 1
		\end{pmatrix}
\neq\begin{pmatrix}
	1& b\\
	0 & 1
	\end{pmatrix} =h	\end{align*}
and
	\begin{align*}
	ghg^{-1} = 
	\begin{pmatrix}
		1 & 0 \\
		-b & 1
	\end{pmatrix}
	\neq\begin{pmatrix}
		1& -b\\
		0 & 1
	\end{pmatrix} =h^{-1}.
	\end{align*}
Therefore, $\langle g,h\rangle$ is not contained in a dihedral subgroup of $\psl{2}{q}$.
	
	\item An element of the group $S_4$ has cycle type $(1,1,1,1), (2,1,1),(2,2),(3,1)$, or $(4)$. Therefore, the group $S_4$, and by extension its
	subgroup $A_4$, cannot have elements of order $p\geq 5$. 
	Similarly, an element of the group $A_5$ has cycle type $(1,1,1,1,1)$,$(2,2,1)$, $(3,1,1),$ and $(5)$. Hence, $A_5$ has an element of prime order $p\geq 5$ only when $p = 5$. The group $A_5$ is $(2,5)$-generated, and  
	if $x$ and $y$ are generators of $A_5$ such that $|x| = 2$ and $|y| = 5$, then $|xy| = |yx| \in \{3,5\}$.
	To see this, note $|xy|=|yx|$ because $yx=x^2yx=x(xy)x^{-1}$. Because  
	the elements of $S_5$ of order $4$ are $4$-cycles,
	$A_5$ has no elements of order $4$, which implies $|xy|\not=4$.   
	If it were true that $|xy|=2$, then we would have $xy=y^{-1}x$, and so the subgroup $\langle x, y\rangle \cong D_{10}$, the dihredral
	group of order $10$. Therefore, as obviously $|xy|\not=1$,
	the only possible orders of $xy$ and $yx$ are $3$ or $5$. 
	
	By the previous paragraph, $H$ cannot be contained in any subgroup of $\psl{2}{q}$ isomorphic to $A_4$ or $S_4$.
	Now, if we assume by contradiction that $H$ is contained in a subgroup of $\psl{2}{q}$ that is isomorphic to $A_5$, then $hg$ has order $3$ or $5$. We have 
	\begin{align*}
		&(hg)^3
		=
		\begin{pmatrix}
			-b^3+2b & b^2-1\\
			-b^2+1&b
		\end{pmatrix},\\
		&(hg)^5
		=
		\begin{pmatrix}
			-b^5+4b^3-3b & b^4-3b^2+1\\
			-b^4+3b^2-1&b^3-2b
		\end{pmatrix}.
	\end{align*}
	If $(hg)^3$ is equal to the identity element, then $b \in\{1,-1\}$ which is impossible by choice of $b$. If $(hg)^5$ is the identity, then $b^4-3b^2+1 = 0$ which is again impossible by choice of $b$. Consequently, $H$ is not contained in any subgroup of $\psl{2}{q}$ isomorphic to $A_5$.

	\item Since $b$ is a primitive element of $\mathbb F_q$, $b$ does not belong to any subfield of $\mathbb{F}_q$.  
	Therefore, we have that $h\notin \mbox{\rm PSL}_2(\mathbb {F}_{q'})$, where $q = (q')^t$ for a prime $t$,
	and consequently $H$ is not contained in the subgroup 
	$\mbox{\rm PSL}_2(\mathbb {F}_{q'})$.
	
	\item The proof is identical to case (4) above.
	
\end{enumerate}

\textbf{Case (ii):} The assumption for this case is $q \equiv 1\pmod{p} $. By Lemma~\ref{prop} (b), an element of order $p$ is conjugate to $
\begin{pmatrix}
	r& 0\\
	0 &r^{-1}
\end{pmatrix},$	where $r\in \mathbb{F}_q$ such that $r^p = \pm 1$. Fix a primitive element $\omega$ of $\mathbb{F}_q$. Define 
\[
g=\begin{pmatrix}
	0 & 1\\
	-1 & 0
\end{pmatrix} \quad
\mbox{and} \quad
h=\begin{pmatrix}
	r& b\\
	0 & r^{-1}
\end{pmatrix},
\]
where $r=\omega^{\frac{q-1}{p}}$ and $b\in \mathbb{F}_q$ is a primitive element such that $b^4-3b^2+1 \neq 0$. 

Note that the element $h$ has order $p$, regardless of the choice of $b$. The analysis that $H=\langle g,h\rangle$ is not contained in any maximal subgroup in the list (1)-(5) is identical to the 
analysis in Case (i), with the exception of the group $A_5$. Assume that $H$ is contained in a subgroup of $\psl{2}{q}$ which is isomorphic to $A_5$. Then, as we saw in Case (i), the element $hg$ must have order $3$ or $5$. Using the expressions
\begin{align*}
	&(hg)^3
	=
\begin{pmatrix}
	-b^3+2b & r(b^2-1)\\
	-(b^2-1)r^{-1}&b
\end{pmatrix}, \mbox{ and }\\
	&(hg)^5
	=
	\begin{pmatrix}
		-b^5+4b^3-3b & r(b^4-3b^2+1)\\
		r^{-1}(-b^4+3b^2-1)&b^3-2b
	\end{pmatrix},
\end{align*}
we deduce that $b\in \{1,-1\}$ or $b^4-3b^2+1 = 0$. By choice of $b$, neither of these identities hold. Consequently, $H$ is not contained in any subgroup of $\psl{2}{q}$ isomorphic to $A_5$.

\textbf{Case (iii):} The assumption in this case is $q \equiv -1\pmod{p} $.  By Lemma~\ref{prop}(c), an element of order $p$ is conjugate to $
\begin{pmatrix}
	0 & -1\\
	1 &\theta+\theta^{-1}
\end{pmatrix}$,
where $\theta\in \mathbb{F}_{q^2}\setminus\mathbb F_q$ and $\theta^q=\theta^{-1}$. 
Define 
\[
g=\begin{pmatrix}
	0 & 1\\
	-1 & 0
\end{pmatrix} \quad
\mbox{and} \quad
h=\begin{pmatrix}
	0& -b^{-1}\\
	b & \theta+\theta^{-1}
\end{pmatrix},
\]
where $b\in \mathbb{F}_q^*$ is a primitive element such that $b^4 \neq 1$ and $b^6 \neq 1$. Let us first show that such an element $b$ always exist. If $q \not \in\{5,7\}$, then for any primitive element $b\in \mathbb{F}_q$, we have $b^{4}\neq 1$ and $b^6 \neq 1$. Since we are dealing with cases $q \not \in\{5,7\}$, so we can always assume the existence of a primitive element such that $b^{4}\neq 1$ and $b^6 \neq 1$.


The required calculations differ somewhat from the previous two cases, and so, for completeness, we provide full details of the arguments. As before,
our aim is to show $H=\langle g,h\rangle$ is not contained in any of the maximal subgroups listed in (1)-(5).

\begin{enumerate}
	\item  Contrary to what we aim to prove, assume  
	$H$ is contained in a Borel subgroup. Because
	a subgroup of $\mathrm{PSL}_2(\mathbb F_q)$ is contained in a Borel subgroup if and only if it is reducible, 
	the group $H$ leaves a one-dimensional subspace of $\mathbb F_q^2$ invariant. However, 
	$h$ does not admit eigenvalues in $\mathbb{F}_q$, and so it cannot preserve any one-dimensional subspace.
	Thus, it is not possible for $H$ to be a subgroup of a Borel subgroup. 
		
	\item
		If $\langle g,h\rangle$ is a subgroup of a dihedral group of $\psl{2}{q}$, then $h$ lies in the cyclic part, and we either have $ghg^{-1} = h^{-1}$ (when $g$ is outside of the cyclic group) or $ghg^{-1} = h$ (when $g$ is the central involution of the cyclic group). However,
	\begin{align*}
		ghg^{-1} &= 
	\begin{pmatrix}
			\theta + \theta^{q} & -b \\
			{b}^{-1} & 0
		\end{pmatrix}
		\neq\begin{pmatrix}
			0& -b^{-1}\\
			b & \theta+\theta^{-1}
		\end{pmatrix} =h,
	\end{align*}
	and
	\begin{align*}
		ghg^{-1} = 
		\begin{pmatrix}
			\theta + \theta^{q} & -b \\
			{b}^{-1} & 0
		\end{pmatrix}
		\neq\begin{pmatrix}
			\theta+\theta^{-1}& b^{-1}\\
			-b & 0
		\end{pmatrix} =h^{-1},
	\end{align*}
	 since  $b^4 \neq 1.$
Therefore, $\langle g,h\rangle$ is not contained in a dihedral subgroup of $\psl{2}{q}$.
	
	\item  As we saw before, since $H$ admits an element of order $p\geq 5$, the subgroups $A_4$ and $S_4$ cannot contain $H$. If $H$ is contained in a subgroup of $\psl{2}{q}$ isomorphic to $A_5$, then $p = 5$ and $hg$ has order $3$ or $5$. The element $hg\in H$ is
	\begin{align*}
		hg =
		\begin{pmatrix}
			b^{-1} & 0\\
			-(\theta+\theta^q)&b
		\end{pmatrix}.
	\end{align*}
	If $(hg)^k$ is equal to the identity of $\psl{2}{q}$ for $k\in \{3,5\}$, then 	$b^{2k} = 1$. By choice of $b$, we  know that $b^6 \neq 1$. Moreover, $b^{10}\neq 1$ since $q\equiv -1 \pmod 5$. Consequently, $hg$ cannot have order equal to $3$ or $5$, thus contradicting the fact that $H$ is a subgroup of $A_5$.
	
	\item Note that $g\in \mbox{\rm PSL}_2(\mathbb {F}_{q'})$, where $q = (q')^t$ for a prime $t$, but, by assumption, 
	$b $ does not belong to any subfield of $\mathbb{F}_q$. Therefore, 
	we have that $h\notin \mbox{\rm PSL}_2(\mathbb {F}_{q'})$, 
	and consequently $H$ is not contained in $\mbox{\rm PSL}_2(\mathbb {F}_{q'})$.
	
	\item The proof is identical to case (4) above.
\end{enumerate}

This concludes the proof that $H = \langle g, h \rangle$ is not contained in any maximal subgroup of $\psl{2}{q}$, and so 
it must be that $H=\psl{2}{q}$. Hence, $\psl{2}{q}$ is $(2,p)$-generated.
\end{proof}


\section{Discussion}

The problem of determining generators $x$ and $y$ of finite groups $G$ having prescribed orders has a long history. Dating back to 1893, Hurwitz showed that 
the conformal automorphism group of an algebraic curve of genus $g \geq  2$ is at most $84(g - 1)$, and that the upper bound is achieved if and only if the automorphism group
is generated by elements $x$ and $y$ for which $x$, $y$, and $xy$ are of order $2$, $3$, and $7$, respectively. Such groups are known as Hurwitz groups, and 
Macbeath determined that $\psl{2}{q}$ is a Hurwitz group if and only if 
$q=7$ or $q\equiv \pm 1\pmod  7$ \cite{macbeath1969}. 

In contrast to these classical results,
the main result of this paper determines precisely those prime powers $q$ for which $\psl{2}{q}$ is $(2,p)$-generated, for a fixed prime number $p\geq 5$ (unrelated to $q$),
but without placing any further requirements on the order of the product $xy$ of generators $x$ and $y$ (of orders $2$ and $p$). Moreover, the proof of Theorem \ref{main-theorem-1} has constructive aspects whereby 
generators of prescribed order are explicitly identified. Potential applications of Theorem \ref{main-theorem-1} to problems in combinatorics or geometry remain to be considered.

\section*{Acknowledgements}
This work was supported, in part, by the Discovery Grant program of
the Natural Sciences and Engineering Research Council of Canada and 
the Pacific Institute for the Mathematical Sciences Postdoctoral Fellowship program.


\end{document}